\newcommand{\C}{{\mathbb C}}
\newcommand{\Z}{{\mathbb Z}}
\newcommand{\Q}{{\mathbb Q}}
\newcommand{\hyp}{{\mathbb H}}
\newcommand{\arrow}[1]{\stackrel{#1}{\longrightarrow}}
\newcommand{\Adot}{\mathbf A^\bullet}
\newcommand{\Fdot}{\mathbf F^\bullet}
\newtheorem{defn0}{Definition}[section]
\newtheorem{prop0}[defn0]{Proposition}
\newtheorem{conj0}[defn0]{Conjecture}
\newtheorem{thm0}[defn0]{Theorem}
\newtheorem{lem0}[defn0]{Lemma}
\newtheorem{corollary0}[defn0]{Corollary}
\newtheorem{example0}[defn0]{Example}
\newtheorem{remark0}[defn0]{Remark}
\newtheorem{question0}[defn0]{Question}
\newtheorem{exercise0}[defn0]{Exercise}
\newenvironment{defn}{\begin{defn0}}{\end{defn0}}
\newenvironment{prop}{\begin{prop0}}{\end{prop0}}
\newenvironment{lem}{\begin{lem0}}{\end{lem0}}
\newenvironment{rem}{\begin{remark0}\rm}{\end{remark0}}
\newcommand{\defref}[1]{Definition~\ref{#1}}
\newcommand{\lemref}[1]{Lemma~\ref{#1}}
\title{A note on the support and cosupport conditions for a perverse sheaf}
\subjclass[2010]{32B15, 32C18, 32B10, 32S25, 32S15, 32S55}
\author{David B. Massey}
\date{}
\begin{document}

\begin{abstract} We give a characterization of the support and cosupport conditions for a perverse sheaf in terms of the Whitney filtration. \end{abstract}

\maketitle




\section{Introduction}

Let $R$ be a regular, Noetherian ring with finite Krull dimension (e.g., $\Z$, $\Q$, or $\C$), and let $\Adot$ be a bounded, constructible complex of sheaves of $R$-modules on a complex analytic space $X$, i.e., let $\Adot\in D^b_c(X)$.

The complex of sheaves $\Adot$ is, by definition, a perverse sheaf (using the singular form ``sheaf'' is standard) if and only if it satisfies two conditions: the {\bf support} and {\bf cosupport} conditions. There are two equivalent well-known characterizations of these conditions, a characterization that does not refer to a Whitney stratification and one that does. See \cite{BBD}, \cite{inthom2}, \cite{kashsch}, \cite{dimcasheaves}, and \cite{schurbook}.

To give these two characterizations, let us first select a Whitney stratification $\mathcal S$ of $X$ with respect to which $\Adot$ is constructible and, for each $S\in\mathcal S$, let $r_S:S\hookrightarrow X$ denote the inclusion. Note that there is no requirement that the strata of $\mathcal S$ be connected, so in fact, by replacing the strata of $\mathcal S$ with a stratification where the strata are the unions of the strata of $\mathcal S$ of each given dimension, we may assume if we wish that there is (at most) one stratum of each dimension . 

We also need to define the $k$-th support and cosupport of $\Adot$. For all $x\in X$, let $j_x:\{x\}\hookrightarrow X$ denote the inclusion. The $k$-th support of $\Adot$ is
$$
\operatorname{supp}^k(\Adot):=\overline{\{x\in X\,|\, H^k(j_x^*\Adot)\neq 0\}}
$$
and the $k$-th cosupport is
$$
\operatorname{cosupp}^k(\Adot):=\overline{\{x\in X\,|\, H^k(j_x^!\Adot)\neq 0\}}.
$$

Now we give the standard descriptions of support and cosupport conditions on $\Adot$. The dimensions here are the complex dimensions and, as usual, a negative dimension indicates that a set is empty.
\medskip
\begin{defn}\label{defn:suppco} The support and cosupport conditions are defined in either/both of the following two ways:
\begin{enumerate}
\item 
\begin{itemize}
\item S1: (support) For all $k$, $\dim(\operatorname{supp}^{-k}(\Adot))\leq k$.
\smallskip
\item C1: (cosupport) For all $k$, $\dim (\operatorname{cosupp}^{k}(\Adot))\leq k$.
\end{itemize}
\bigskip
\item 
\begin{itemize}
\item S2: (support) For all $S\in\mathcal S$, for all $k>-\dim S$, $\mathbf H^k(r_S^*\Adot)=0$.
\smallskip
\item C2: (cosupport)  For all $S\in\mathcal S$, for all $k<-\dim S$, $\mathbf H^k(r_S^!\Adot)=0$.
\end{itemize}
\end{enumerate}
\end{defn}

\begin{rem} It is easy to see tht the two different characterizations of the support condition are equivalent, but it is significantly more difficult to to see that the two different cosupport conditions are equivalent. The two variants are connected by the following general result.

If $g : Y \hookrightarrow X$ is the inclusion of an orientable submanifold into another orientable manifold, and $r$ is the real codimension of $Y$ in $X$, and $\Fdot \in \bold D^b_c(X)$ has locally constant
cohomology on $X$, then $g^!\Fdot$ has locally constant cohomology on $Y$
and $g^*\Fdot[-r] \cong g^!\Fdot$.

\smallskip
Referring now to \defref{defn:suppco}, if we let $\Fdot=r_S^!\Adot$, let $x\in S$, let $g:\{x\}\hookrightarrow S$ be the inclusion, and $r=2\dim S$ (where $\dim$ means the complex dimension), we obtain
$$
\mathbf H^k(r_S^!\Adot)_x\cong H^k(g^*r_S^!\Adot)\cong H^k(g^!r_S^!\Adot[2\dim S])\cong H^{k+2\dim S}(j_x^!\Adot).
$$
Therefore, $\mathbf H^k(r_S^!\Adot)=0$ if and only if, for all $x\in S$, $H^{k+2\dim S}(j_x^!\Adot)=0$  and so:

\smallskip

\noindent for all $k<-\dim S$, $\mathbf H^k(r_S^!\Adot)=0$ if and only if, for all $k<\dim S$, for all $x\in S$,  $H^{k}(j_x^!\Adot)=0$.

That is, for all $S\in\mathcal S$, for all $k<-\dim S$, $\mathbf H^k(r_S^!\Adot)=0$ if and only if, for all $k$, $\operatorname{cosupp}^{k}(\Adot)\subseteq \bigcup_{\dim S\leq k}S$.
\end{rem}

\vskip 0.2in

There is only one result in this short paper, one that may be of use as a lemma for other results; we give the support and cosupport conditions in terms of the open and closed filtrations of $X$ which come from the stratification $\mathcal S$.

\medskip

\section{support and cosupport in terms of filtrations}

We continue with all of our notation from the introduction. In particular, we have a Whitney stratification $\mathcal S$ of $X$ with respect to which $\Adot$ is constructible and, for each $S\in\mathcal S$,  $r_S:S\hookrightarrow X$ is the inclusion.

\smallskip

We define the {\bf upper filtration} of $X$ by: for all $m\geq0$, let $U^m:=\bigcup_{S\in\mathcal S, \dim S\geq m}S$. We define the {\bf lower filtration} of $X$ by: for all $m\geq 0$, let $L^m:=\bigcup_{S\in\mathcal S, \dim S\leq m}S$.

\smallskip

Then, our new characterizations of the support and cosupport condition are:
\smallskip
\begin{lem}\label{lem:main} For all $m\geq 0$, let $u_m: U^m\hookrightarrow X$ and $\ell_m: L^m\hookrightarrow X$ denote the inclusions.
\begin{itemize}
\item The support condition is equivalent to: (new support) for all $m\geq 0$, for all $k>-m$, $\mathbf H^k(u_m^*\Adot)=0$.
\smallskip
\item The cosupport condition is equivalent to: (new cosupport) for all $m\geq0$, for all $k<-m$, $\mathbf H^k(\ell_m^!\Adot)=0$.
\end{itemize}
\end{lem}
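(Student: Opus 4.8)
The plan is to translate both halves of the statement into stratum-by-stratum assertions: for the support condition this is essentially immediate, whereas the cosupport condition needs an induction along the skeletal filtration. I use throughout that the skeleta of a Whitney stratification are closed, so that $L^m=\bigcup_{\dim S\le m}S$ is closed in $X$ and consequently $U^m=X\setminus L^{m-1}$ is open in $X$. Since S1 and S2 are known to be equivalent, and C1 and C2 are known to be equivalent (the latter by the Remark above), it suffices to match the ``new'' conditions against S2 and C2.

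\emph{Support.} Because $u_m$ is an open inclusion, $u_m^*\Adot$ is just the restriction of $\Adot$ and $\mathbf H^k(u_m^*\Adot)_x=H^k(j_x^*\Adot)$ for $x\in U^m$; moreover $x\in U^m$ exactly when the stratum $S$ containing $x$ has $\dim S\ge m$. Hence the new support condition reads: for every $x$, if $x$ lies in a stratum $S$ with $\dim S=d$, then $H^k(j_x^*\Adot)=0$ for all $m\le d$ and all $k>-m$. Among these, the index $m=d$ gives the widest range of $k$ and subsumes the rest, so the condition is exactly ``$H^k(j_x^*\Adot)=0$ for all $k>-d$'', which as $x$ ranges over $S$ is S2. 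I would write this out in a few lines.

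\emph{Cosupport, set-up.} The subtle half — mirroring the fact that C1$\Leftrightarrow$C2 is harder than S1$\Leftrightarrow$S2 — is that the stalks of $\ell_m^!\Adot$ are not the local groups $H^\bullet(j_x^!\Adot)$, so I would not compute them directly. Instead, write $W_m:=L^m\setminus L^{m-1}=\bigcup_{\dim S=m}S$ for the pure $m$-dimensional part, on which each dimension-$m$ stratum is open and closed, and let $\alpha_m\colon L^{m-1}\hookrightarrow L^m$, $\beta_m\colon W_m\hookrightarrow L^m$, $\gamma_m=\ell_m\beta_m\colon W_m\hookrightarrow X$ be the inclusions. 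Applying the closed--open distinguished triangle of the pair $(L^{m-1},W_m)$ to the complex $\ell_m^!\Adot$ on $L^m$, and using $\ell_m\alpha_m=\ell_{m-1}$ together with $\beta_m^*=\beta_m^!$, gives
$$(\alpha_m)_*\,\ell_{m-1}^!\Adot\ \longrightarrow\ \ell_m^!\Adot\ \longrightarrow\ (\beta_m)_*\,\gamma_m^!\Adot\ \xrightarrow{[1]}.$$
Here $(\alpha_m)_*$ is exact (closed inclusion); $(\beta_m)_*$ is left exact, hence sends a complex with cohomology in degrees $\ge a$ to one with cohomology in degrees $\ge a$; and $\gamma_m^!\Adot|_S=r_S^!\Adot$ for every stratum $S$ of dimension $m$, since $S$ is open and closed in $W_m$. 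In particular the vanishing of $\mathbf H^k(\gamma_m^!\Adot)$ for all $k<-m$ is precisely C2 for the dimension-$m$ strata.

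\emph{Cosupport, induction and main obstacle.} Now induct on $m$, starting from the trivial case $L^{-1}=\emptyset$. For C2$\Rightarrow$(new cosupport): assuming $\mathbf H^k(\ell_{m-1}^!\Adot)=0$ for $k<-(m-1)$, the long exact cohomology sequence of the triangle, together with the vanishing of $\mathbf H^k((\beta_m)_*\gamma_m^!\Adot)$ for $k<-m$ (from C2 and the left-exactness of $(\beta_m)_*$) and of $\mathbf H^k((\alpha_m)_*\ell_{m-1}^!\Adot)$ for $k<-m$ (from the inductive hypothesis, since $-m<-(m-1)$), forces $\mathbf H^k(\ell_m^!\Adot)=0$ for $k<-m$. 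For the converse: assuming the new cosupport condition at every level, the same long exact sequence shows $\mathbf H^k((\beta_m)_*\gamma_m^!\Adot)$ injects into $\mathbf H^{k+1}((\alpha_m)_*\ell_{m-1}^!\Adot)$ for $k<-m$, and the latter vanishes by the new cosupport condition at level $m-1$; applying the exact functor $\beta_m^*$ (with $\beta_m^*(\beta_m)_*\cong\mathrm{id}$) then gives $\mathbf H^k(\gamma_m^!\Adot)=0$ for $k<-m$, i.e.\ C2 for every stratum of dimension $m$, and ranging over $m$ finishes the proof. The main obstacle is organizational rather than conceptual: the entire argument hinges on having the attaching triangle in the clean form displayed above, which in turn rests on the routine but essential facts that $L^m$ is closed in $X$, that $\ell_m\alpha_m=\ell_{m-1}$ and $\ell_m\beta_m=\gamma_m$ compose correctly under the exceptional functors, and that each top-dimensional stratum of $W_m$ is open and closed in $W_m$; these I would verify explicitly before invoking the triangle.
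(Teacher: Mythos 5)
Your proposal is correct and follows essentially the same strategy as the paper's proof: the support half reduces to a trivial reindexing, and the cosupport half proceeds by induction along the skeletal filtration using the closed--open attaching triangle applied to $\ell_m^!\Adot$, with the key identifications $\alpha_m^!\ell_m^! \cong \ell_{m-1}^!$ and $\beta_m^*\ell_m^! \cong \gamma_m^!$. The minor differences (you keep multiple strata per dimension rather than collapsing to one stratum per dimension as the paper does for convenience, you invoke left-exactness of $R(\beta_m)_*$ rather than computing stalks via hypercohomology, and you derive the easy implication (new cosupport) $\Rightarrow$ (C2) from the triangle's long exact sequence rather than directly from $q_m^*\ell_m^!\Adot \cong r_m^!\Adot$) are purely cosmetic and do not change the substance of the argument.
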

\begin{proof} 

\smallskip

\phantom{filler}

\smallskip

\noindent Support:

\smallskip

This is easy. Both (S2) and the new support condition are trivially equivalent to: for all $k$, for all $S\in\mathcal S$ such that $\dim S>-k$, for all $x\in S$, $H^k(j^*_x\Adot)=0$.

\bigskip

\noindent Cosupport:

\smallskip

For notational convenience, we assume that our Whitney stratification has, at most, one stratum for each dimension. We can do this by replacing the original individual strata of dimension $m$ by one stratum which is the union of all of the original $m$-dimensional strata. This has no effect on Version 2 in \defref{defn:suppco} as each original $m$-dimensional stratum is an open subset of the union of all of the $m$-dimensional strata (since the union is locally connected). We denote the unique $m$-dimensional stratum by $S^m$.

Now, let $q_m:S^m\hookrightarrow L^m$ denote the inclusion; as $S^m$ is open in $L^m$, we have a natural isomorphism $q_m^*\cong q_m^!$. We write $r_m$  in place of $r_{S^m}$ for the inclusion of $S^m$ into $X$. Thus, $r_m=\ell_m q_m$.

\smallskip

Note that with the above assumption and notation, the cosupport condition (C2) becomes:  \newline 
\noindent (C2) \ for all $m$, for all $k<-m$, $\mathbf H^k(r_m^!\Adot)=0$.

\bigskip

\noindent New cosupport $\implies$ (C2):

\smallskip

Suppose the new cosupport condition holds, so that, for all $m\geq 0$, for all $k<-m$, $\mathbf H^k(\ell_m^!\Adot)=0$. Then, it is immediate that,  for all $m\geq 0$, for all $k<-m$, 
$$0=\mathbf H^k(q_m^*\ell_m^!\Adot)\cong \mathbf H^k(q_m^!\ell_m^!\Adot)\cong \mathbf H^k(r_m^!\Adot),
$$
which is (C2).

\bigskip

\noindent (C2) $\implies$ new cosupport:

\smallskip

Suppose that (C2) holds. So assume that, for all $m\geq 0$, for all $k<-m$, $\mathbf H^k(r_m^!\Adot)=0$. We shall prove by induction on $m$ that, for all $m\geq0$, for all $k<-m$, $\mathbf H^k(\ell_m^!\Adot)=0$.

\bigskip

\noindent $m=0$: 

\smallskip

Since $S^0=L^0$, (C2) immediately implies that, for all $k<0$, $\mathbf H^k(\ell_{0}^!\Adot)=0$.

\bigskip

\noindent Inductive step: 

\smallskip

Now assume that  $m_0\geq 1$ and that the new cosupport conditions holds for all $m$ such that $0\leq m\leq m_0-1$. We wish to show that, for all $k<-m_0$, $\mathbf H^k(\ell_{m_0}^!\Adot)=0$.

Let $p_{m_0}: L^{m_0-1}=L^{m_0}\backslash S^{m_0}\hookrightarrow L^{m_0}$ denote the closed inclusion. Then we have the fundamental distinguished triangle
$$
(p_{m_0})_!p_{m_0}^!(\ell_{m_0}^!\Adot)\rightarrow \ell_{m_0}^!\Adot\rightarrow (q_{m_0})_*q_{m_0}^*(\ell_{m_0}^!\Adot)\arrow{[1]} 
$$
and the associated long exact sequence on stalk cohomology.

Suppose that $k<-m_0$ and $x\in L^{m_0}$. We will show that the stalk cohomology $H^k( \ell_{m_0}^!\Adot)_x=0$ by showing that $H^k\big((p_{m_0})_!p_{m_0}^!(\ell_{m_0}^!\Adot)\big)_x=0$ and $H^k\big((q_{m_0})_*q_{m_0}^*(\ell_{m_0}^!\Adot)\big)_x=0$.

\smallskip

As $\ell_{m_0}p_{m_0}=\ell_{m_0-1}$ and $k<-m_0<-(m_0-1)$, we have
$$
H^k\big((p_{m_0})_!p_{m_0}^!(\ell_{m_0}^!\Adot)\big)_x\cong H^k\big((p_{m_0})_!\ell_{m_0-1}^!\Adot\big)_x,
$$
which equals $0$ by our inductive step if $x\in L^{m_0-1}$ and equals $0$ otherwise since $(p_{m_0})_!$ is the extension by zero.

\smallskip

As $q_{m_0}^*\cong q_{m_0}^!$ and $r_{m_0}=\ell_{m_0} q_{m_0}$, we have  $H^k\big((q_{m_0})_*q_{m_0}^*(\ell_{m_0}^!\Adot)\big)_x\cong H^k\big((q_{m_0})_*r_{m_0}^!\Adot\big)_x$, and note that, by (C2),  for all $k<-m_0$, $\mathbf H^k(r_{m_0}^!\Adot)=0$. By constructibility, there is an open neighborhood $U$ of $x$ in $L^{m_0}$ such that
$$
H^k\big((q_{m_0})_*r_{m_0}^!\Adot\big)_x\cong \hyp^k\big(U; \big(r_{m_0}^!\Adot\big)_{|_U}\big),
$$
but now either the canonical injective resolution of $\big(r_{m_0}^!\Adot\big)_{|_U}$ or the $E_2$ spectral sequence for hypercohomology tells us that this is $0$.
\end{proof}

\bigskip

Why did we want to prove that there is a third way of describing the support and cosupport conditions, as is given in \lemref{lem:main}? 

\medskip

It is our hope that these new support and cosupport conditions will become as equally well-known as the two versions given in \defref{defn:suppco}, mainly for the new version of the cosupport condition. We, ourselves, and others have instead had to essentially prove that (C2) implies the new cosupport condition in the midst of other results; this is the case, for instance, in the proof of Theorem 3.1 in \cite{MPT} where $\Adot$ is the perverse sheaf of shifted vanishing cycles along a function $f$. 

Using the notation from \lemref{lem:main}, we can now easily prove the following proposition.

\begin{prop}\label{prop:main} Let $\mathcal S$ be a Whitney stratification of $X$ with respect to which $\Adot$ is constructible. For all $m\geq 0$, let $U^m:=\bigcup_{S\in\mathcal S, \dim S\geq m}S$. Suppose that $\Adot$ satisfies the cosupport condition.

Then, for all $m\geq 0$,
\begin{enumerate}
\item for all $k\leq -m-2$, the canonical morphism yields an isomorphism $\hyp^k(X;\,\Adot)\cong \hyp^k(U^{m+1};\, \Adot_{|_{U^{m+1}}})$, and
\smallskip
\item the canonical morphism yields an injection $\hyp^{-m-1}(X;\,\Adot)\hookrightarrow \hyp^{-m-1}(U^{m+1};\, \Adot_{|_{U^{m+1}}})$.
\end{enumerate}
\end{prop}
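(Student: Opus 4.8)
The plan is to compare $X = U^0$ with $U^{m+1}$ by peeling off one stratum at a time, using the open/closed decomposition $U^j = U^{j+1} \sqcup S^j$ in which $S^j$ is closed in $U^j$ and $U^{j+1}$ is open in $U^j$. As in the proof of \lemref{lem:main}, I would first reduce to the case where there is at most one stratum $S^j$ of each dimension $j$, since this does not affect the cosupport condition nor the hypercohomology groups of $X$ and $U^{m+1}$. Writing $v_j : U^{j+1} \hookrightarrow U^j$ for the open inclusion and $w_j : S^j \hookrightarrow U^j$ for the closed inclusion, the fundamental distinguished triangle
$$
(w_j)_! w_j^!(\Adot_{|_{U^j}}) \longrightarrow \Adot_{|_{U^j}} \longrightarrow (v_j)_*v_j^*(\Adot_{|_{U^j}}) \arrow{[1]}
$$
gives, on applying $\hyp^*(U^j; -)$ and using $\hyp^*(U^j;(w_j)_!w_j^!(\Adot_{|_{U^j}})) \cong \hyp^*_{S^j}(U^j; \Adot_{|_{U^j}})$ and $\hyp^*(U^j;(v_j)_*v_j^*(\Adot_{|_{U^j}})) \cong \hyp^*(U^{j+1};\Adot_{|_{U^{j+1}}})$, a long exact sequence
$$
\cdots \to \hyp^k_{S^j}(U^j;\Adot_{|_{U^j}}) \to \hyp^k(U^j;\Adot_{|_{U^j}}) \to \hyp^k(U^{j+1};\Adot_{|_{U^{j+1}}}) \to \hyp^{k+1}_{S^j}(U^j;\Adot_{|_{U^j}}) \to \cdots
$$

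The crux is then to bound $\hyp^k_{S^j}(U^j;\Adot_{|_{U^j}})$ from below using the cosupport condition. The local cohomology along $S^j$ is computed by the complex $(w_j)_* w_j^! (\Adot_{|_{U^j}})$, whose stalk cohomology at $x \in S^j$ is (up to passing to a small neighborhood and using constructibility) $\mathbf H^\bullet(w_j^!(\Adot_{|_{U^j}}))_x$; since $S^j$ is a submanifold of complex codimension $\geq 0$ wait — here $S^j$ sits in $U^j$ with the ambient real dimension varying, so I should instead go directly through $r_j^! \Adot$ as in the lemma. Concretely, $w_j^!(\Adot_{|_{U^j}}) \cong r_j^!\Adot$ since $S^j$ is locally closed in $X$ with $U^j$ an open neighborhood of it, and the cosupport condition in the form (C2) says $\mathbf H^k(r_j^!\Adot) = 0$ for $k < -j$. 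A spectral sequence (or injective-resolution) argument of the type used at the end of the lemma's proof then gives $\hyp^k_{S^j}(U^j;\Adot_{|_{U^j}}) = \hyp^k(S^j; r_j^!\Adot) = 0$ for $k < -j$. In particular, for the strata $S^0, S^1, \dots, S^m$ that get removed in passing from $X$ to $U^{m+1}$, the smallest relevant index is $j = m$, giving vanishing for $k < -m$, i.e. for $k \leq -m-1$.

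Assembling this, I would iterate the long exact sequences for $j = 0, 1, \dots, m$. At each stage, $\hyp^k_{S^j}(U^j;\Adot_{|_{U^j}})$ and $\hyp^{k+1}_{S^j}(U^j;\Adot_{|_{U^j}})$ both vanish when $k+1 < -j$, i.e. $k \leq -j-2$, forcing $\hyp^k(U^j;\Adot_{|_{U^j}}) \cong \hyp^k(U^{j+1};\Adot_{|_{U^{j+1}}})$ for $k \leq -j-2$; and when $k = -j-1$ the term $\hyp^k_{S^j}(U^j;\Adot_{|_{U^j}})$ still vanishes (as $k = -j-1 < -j$) while $\hyp^{k+1}_{S^j}$ need not, so we get only an injection $\hyp^{-j-1}(U^j;\Adot_{|_{U^j}}) \hookrightarrow \hyp^{-j-1}(U^{j+1};\Adot_{|_{U^{j+1}}})$. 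Chaining these for $j = 0, \dots, m$: for $k \leq -m-2$ every step is an isomorphism (since $k \leq -m-2 \leq -j-2$ for all such $j$), yielding part (1); for $k = -m-1$ the steps $j = 0, \dots, m-1$ are isomorphisms (there $-m-1 \leq -j-2$) and the final step $j = m$ is an injection, yielding part (2). The main obstacle to watch is making the identification $\hyp^k_{S^j}(U^j; \Adot_{|_{U^j}}) \cong \hyp^k(S^j; r_j^!\Adot)$ and its vanishing fully rigorous — i.e. correctly invoking constructibility to replace the local cohomology sheaf's hypercohomology over $U^j$ by that over $S^j$, exactly as in the closing paragraph of the proof of \lemref{lem:main} — together with the bookkeeping that the canonical morphisms in these triangles compose to the canonical restriction morphism $\hyp^k(X;\Adot) \to \hyp^k(U^{m+1};\Adot_{|_{U^{m+1}}})$.
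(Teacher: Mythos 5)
Your proof is correct, but it takes a genuinely different route from the paper's. The paper removes all of $L^m$ in a single step, using the distinguished triangle
$$(\ell_m)_!\ell_m^!\Adot\longrightarrow\Adot\longrightarrow (u_{m+1})_*u_{m+1}^*\Adot\overset{[1]}{\longrightarrow},$$
and then applies \lemref{lem:main} directly: the new cosupport condition gives $\mathbf H^k(\ell_m^!\Adot)=0$ for $k<-m$, hence $\hyp^k(L^m;\ell_m^!\Adot)=0$ for $k<-m$, and the long exact sequence in hypercohomology immediately gives the isomorphism in degrees $k\leq -m-2$ and the injection in degree $-m-1$. You instead peel off one stratum at a time, using the triangles for $U^{j+1}\hookrightarrow U^j\hookleftarrow S^j$ with $j=0,\ldots,m$, and invoke only (C2) at each step to kill $\hyp^\bullet_{S^j}(U^j;\Adot|_{U^j})\cong\hyp^\bullet(S^j;r_j^!\Adot)$ in the appropriate range. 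In effect you re-run the inductive argument from the lemma's proof, but at the level of hypercohomology rather than of stalks. Both arguments are valid, and your bookkeeping (the degree counts, the identification $w_j^!(\Adot|_{U^j})\cong r_j^!\Adot$ via $r_j = u_j\circ w_j$ with $u_j$ open, and the fact that the chain of restrictions composes to the restriction $\hyp^k(X;\Adot)\to\hyp^k(U^{m+1};\Adot|_{U^{m+1}})$) all check out. The tradeoff is that the paper's proof is shorter and makes the proposition manifestly a corollary of the lemma, which is precisely the point the author wants to make about the utility of the new cosupport formulation; your version is more self-contained, needing only (C2) and not the lemma, but at the cost of iterating $m+1$ exact sequences. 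It is also worth noting that your intermediate vanishing $\hyp^k(S^j;r_j^!\Adot)=0$ for $k<-j$ is essentially the key computation in the lemma's inductive step, so the two proofs are, at bottom, reorganizations of the same content.
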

\begin{proof} As before, for all $m\geq 0$, let $L^m:=\bigcup_{S\in\mathcal S, \dim S\leq m}S$, and let $u_m: U^m\hookrightarrow X$ and $\ell_m: L^m\hookrightarrow X$ denote the inclusions. 

Consider the distinguished triangle 
$$
(\ell_m)!\ell_m^!\Adot\rightarrow\Adot\rightarrow (u_{m+1})*u_{m+1}^*\Adot\arrow{[1]}.
$$

Then, combining \lemref{lem:main} with the long exact sequence on hypercohomology yields the stated conclusions (once again using, as in the proof of the lemma, that zero sheaf cohomology below a given dimension implies zero hypercohomology below that dimension).
\end{proof}

\bibliographystyle{plain}

\bibliography{Masseybib}

\end{document}